\theoremstyle{plain}
\newtheorem{theorem}{Theorem}[section] 
\newtheorem{lemma}[theorem]{Lemma} 
\theoremstyle{definition}
\newtheorem{remark}[theorem]{Remark}
\newtheorem{definition}[theorem]{Definition}
\newcommand{\hooklongrightarrow}{\lhook\joinrel\longrightarrow}  
\DeclareMathOperator{\Gal}{Gal}
\begin{document}

\title{On congruences between normalized eigenforms with different sign at a Steinberg prime\thanks{The first author is partially supported by MICINN grants MTM2015-66716-P. The second author is partially supported by MICINN grant MTM2013-45075-P}
}
\author{Luis Dieulefait}
\thanks{The first author is partially supported by MICINN grants MTM2015-66716-P}
 \author{Eduardo Soto}
\thanks{The second author is partially supported by MICINN grant MTM2013-45075-P}

\address{L. Dieulefait\\ Departament de Matemàtiques i Informàtica\\ Universitat de Barcelona\\ Barcelona, Spain}
\email{ldieulefait@ub.edu}

\address{E. Soto\\
Departament de Matemàtiques i Informàtica\\ Universitat de Barcelona\\ Barcelona, Spain} 
\email{eduard.soto@ub.edu}

\maketitle

\markboth{L. Dieulefait and E. Soto}{On congruent eigenforms with different sign}

\begin{abstract}
Let $f$ be a newform of weight $2$ on $\Gamma_0(N)$ with Fourier $q$-expansion $f(q)=q+\sum_{n\geq 2} a_n q^n$, where $\Gamma_0(N)$ denotes the group of invertible matrices with integer coefficients, upper triangular mod $N$. Let $p$ be a prime dividing $N$ once, $p\parallel N$, a Steinberg prime. Then, it is well known that $a_p\in\{1,-1\}$. We denote by $K_f$ the field of coefficients of $f$. Let $\lambda$ be a finite place in $K_f$ not dividing $2p$ and assume that the mod $\lambda$ Galois representation attached to $f$ is irreducible. In this paper we will give necessary and sufficient conditions for the existence of another Hecke eigenform $f'(q)=q+\sum_{n\geq 2} a'_n q^n$ $p$-new of weight $2$ on $\Gamma_0(N)$ and a finite place $\lambda'$ of $K_{f'}$ such that $a_p=-a'_p$ and the Galois representations $\bar\rho_{f,\lambda}$ and $\bar\rho_{f',\lambda'}$ are isomorphic.
\end{abstract}

%
%

\section{Introduction}
\label{intro}
Let $\bar{\mathbb Q}$ denote the algebraic closure of $\mathbb Q$ in the field $\mathbb C$ of complex numbers. Let $f$ be a cusp Hecke eigenform of weight $2$, level $N$ and trivial nebentypus. We attach to $f$ a sequence $\{a_n\}_{n\geq 1}$ of complex numbers consisting of the Fourier coefficients of $f$ at infinity.  We say that $f$ is normalized if $a_1=1$. In this case $K_f=\mathbb Q(\{a_n\}_n)\subset \bar{\mathbb Q}$ and it is a number field. Let $f$, $f'$ be normalized eigenforms of common weight $2$, level $N$ and $N'$ respectively and trivial nebentypus.  Consider the composite field $L=K_f \cdot K_{f'}$ in $\bar{\mathbb Q}$ and let $\mathfrak l$ be a prime of $L$, $\ell\mathbb Z=\mathfrak l\cap \mathbb Z$. We will be interested in pairs of newforms $f$ and $f'$ for which
\begin{equation}\label{cong'}
a_n \equiv a'_n \pmod {\mathfrak l}\qquad \text{for every integer $n$ coprime to $\ell NN'$}.
\end{equation}
The Fourier coefficients of a newform are completely determined by the $a_p$ coefficients of prime subindex. It is easy to see then that \eqref{cong'} is equivalent to 
\begin{equation}\label{cong}
a_p \equiv a'_p \pmod {\mathfrak l}\qquad \text{for every prime $p\nmid \ell NN'$}.
\end{equation}
Let $\lambda=\mathfrak l \cap \mathcal O_{K_f}$ and $\lambda'=\mathfrak l \cap \mathcal O_{K_f'}$ and assume that the residual Galois representations $\bar\rho_{f,\lambda}$, $\bar\rho_{f',\lambda'}$ attached to $f$ and $f'$ are irreducible. Then $f$, $f'$ satisfy \eqref{cong'} if and only if $\bar\rho_{f,\lambda}$ and $\bar\rho_{f',\lambda'}$ are isomorphic. In general, it is not an easy problem to  find for a given newform $f$ another eigenform $f'$ satisfying \eqref{cong'}, neither proving the existence of such an eigenform $f'$. Ribet's level raising \cite{RibetRai} and level lowering \cite{RibetLow} theorems are very powerful in this context. In this article we will consider a newform $f$ of weight $2$ on $\Gamma_0(N)$ together with a prime $\lambda\nmid 2N$ of $K_f$ and will give necessary and sufficient conditions for the existence of another eigenform $f'$ of weight $2$ on $\Gamma_0(N)$ and a prime $\lambda'$ of $K_{f'}$ such that  
\begin{itemize}
\item $\bar\rho_{f,\lambda}$ and $\bar\rho_{f',\lambda'}$ are isomorphic and
\item $a_p=-a'_p$ for a prime $p$ dividing $N$ once and $f'$ is $p$-new.
\end{itemize}
See \cite{RibetRai} (Ribet 1990) for a definition of $p$-new and theorem \ref{THEtheorem} for the exact statement of the theorem.

\subsection*{Acknowledgements}
The authors are grateful to K. Ribet for providing so much valuable feedback. The second author wants to thank X. Guitart and S. Anni for many stimulating conversations and N. Billerey for helpful comments.

\section{Galois representations mod $\lambda$ attached to a normalized eigenform}
From now on let us fix an odd prime $\ell$ and an immersion $\overline{\mathbb Q}\hooklongrightarrow \overline{\mathbb Q}_\ell$. In particular, for every number field $K$ we have fixed a prime $\lambda$ over $\ell$ and a completion  $K_{\ell}\subset \overline{\mathbb Q}_\ell$ of $K$ with respect to $\lambda$. Let $\overline{\mathbb F}_\ell$ denote the residue field of the ring of integers of $\overline{\mathbb Q}_\ell$, which is indeed an algebraic closure of $\mathbb F_\ell$.
Let $f$ be a normalized eigenform on $\Gamma_0(N)$ with $q$-expansion at infinity
$
f(z) = q + \sum_{n\geq 2} a_n q^n.
$
As in the introduction, $K_f$ denotes   the number field $\mathbb Q(\{a_n\}_n)$ of coefficients of $f$ and by $\mathcal O_f$ its ring of integers. Consider the $\ell$-adic Galois representation attached to $f$ by Deligne
$$
\rho_{f,\ell}: \Gal(\overline{\mathbb Q}\mid \mathbb Q)\longrightarrow GL_2(K_{f,\ell})
$$
where $K_{f,\ell}$ denotes the completion of $K_f$ with respect to (the fixed prime $\lambda$ above) $\ell$ and $\mathcal O_{f,\ell}$ denotes its ring of integers. Since $\Gal(\overline{\mathbb Q}\mid \mathbb Q)$ is compact one can (non-canonically) embed its image in  $GL_2(\mathcal O_{f,\ell})$,
$$
\iota : \text{Im}\,\rho_{f,\ell}\longrightarrow GL_2(\mathcal O_{f,\ell}).
$$
 Recall that $\mathcal O_{f,\ell}$ is a local ring whose residue field $\mathcal O_{f,\ell}/\mathfrak m_{f,\ell}$ is a finite extension of $\mathbb F_\ell$ contained in $\overline{\mathbb F}_\ell$. Reducing $\iota\circ \rho_{f,\ell}$ mod $\mathfrak m_{f,\ell}$, we obtain the mod $\ell$ Galois representation
$$
\bar\rho_{f,\ell}:\Gal(\overline{\mathbb Q}\mid\mathbb Q)\longrightarrow GL_2(\mathcal O_{f,\ell}/\mathfrak m_{f,\ell})\hooklongrightarrow GL_2(\overline{\mathbb F}_\ell)
$$
attached to $f$ (independent of $\iota$ up to semi-simplification), satisfying
$$
\text{tr}\,\bar\rho_{f,\ell} (Frob_p) \equiv a_p\qquad \det\bar\rho_{f,\ell}(Frob_p)\equiv p \pmod {\mathfrak m_{f,\ell}}
$$
for every rational prime $p\nmid \ell N$. 

\section{ Lowering and raising the levels}
We state here Ribet's theorems, they will be the main tools needed in the proof of our theorem. See \cite{RibetLow} and \cite{RibetRai} (theorem 1 and remarks in section $3$) for the proofs.
\begin{theorem}[Ribet's level lowering theorem]
Let $f$ be a newform of weight $2$ on $\Gamma_0(N)$, let $p$ be a prime dividing $N$ once. Assume that $\ell\nmid2p$ and that the mod $\ell$ Galois representation 
$$
\bar\rho_{f,\ell}:\Gal(\overline{\mathbb Q}\mid \mathbb Q)\longrightarrow GL_2(\overline{\mathbb F}_\ell)
$$
is irreducible and unramified at $p$. If one or both of the following conditions hold
\begin{enumerate}[1.]
\item $\ell\nmid N$,
\item $\bar\rho_{f,\ell}\vert_{G_{\mathbb Q(\zeta_\ell)}}$ is irreducible,
\end{enumerate}
then there exists a newform $f'$ of weight $2$ on $\Gamma_0(M)$ for some $M$ divisor of $N/p$ such that $\bar\rho_{f',\ell}$ is isomorphic to $\bar\rho_{f,\ell}$.
\end{theorem}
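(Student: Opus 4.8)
The plan is to peel off the prime $p$ one step at a time: it suffices to show that $\bar\rho_{f,\ell}$ already arises from some mod $\ell$ eigenform on $\Gamma_0(N/p)$, for then the newform $f'$ of level $M\mid N/p$ underlying that congruence class satisfies $\bar\rho_{f',\ell}\cong\bar\rho_{f,\ell}$, and iterating removes further primes if desired. First I would record the local picture at $p$. Since $p\|N$ and $f$ is a newform, the local automorphic representation at $p$ is Steinberg up to an unramified twist, so $\rho_{f,\ell}\vert_{G_{\mathbb{Q}_p}}$ is an extension $\left(\begin{smallmatrix}\chi_\ell\,\eta & * \\ 0 & \eta\end{smallmatrix}\right)$ with $\eta$ unramified, $\eta(\operatorname{Frob}_p)=a_p\in\{1,-1\}$, $\chi_\ell$ the $\ell$-adic cyclotomic character, and $*$ genuinely ramified. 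The hypothesis that $\bar\rho_{f,\ell}$ is unramified at $p$ forces the reduction of this extension class to vanish, so $\bar\rho_{f,\ell}\vert_{G_{\mathbb{Q}_p}}$ is a sum of two unramified characters sending $\operatorname{Frob}_p$ to $a_p$ and to $p\,a_p$; in particular $\bar\rho_{f,\ell}(\operatorname{Frob}_p)$ has eigenvalues in the ratio $p$.

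The core is Ribet's geometric argument on $J=J_0(N)$ studied over $\mathbb{Z}_p$. Because $p\|N$, the Deligne--Rapoport model of $X_0(N)$ over $\mathbb{Z}_p$ has special fibre equal to two copies of $X_0(N/p)_{\mathbb{F}_p}$ crossing transversally at the supersingular points; hence the N\'eron model of $J$ at $p$ sits in an extension whose toric part is governed by the supersingular points, whose abelian part is isogenous to $J_0(N/p)_{\mathbb{F}_p}^2$, and whose component group $\Phi$ is an explicit quotient of the free module on supersingular points. Realize $\bar\rho_{f,\ell}$ as a Jordan--H\"older constituent of $J[\mathfrak{m}]$, for the maximal ideal $\mathfrak{m}$ of the Hecke algebra cut out by $(f,\lambda)$. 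By Grothendieck's theory of the monodromy pairing, $I_p$ acts unipotently on the $\ell$-adic Tate module of $J$ and the toric contribution is detected by the nondegenerate monodromy pairing; combined with the local eigenvalue computation above, the hypothesis that $\bar\rho_{f,\ell}$ is unramified at $p$ shows that the $\mathfrak{m}$-part of $J[\ell]$ cannot meet the toric or component-group layers nontrivially, so it is already a subquotient of the $\ell$-torsion of the good-reduction abelian part $J_0(N/p)^2$. A multiplicity-one / Hecke-module bookkeeping then converts this into a mod $\ell$ eigenform on $\Gamma_0(N/p)$ congruent to $f$ away from $\ell N$, as desired.

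When $\ell\nmid N$ (condition (1)) this is essentially Mazur's principle and runs smoothly: $J$ has good reduction at $\ell$, so $J[\ell]$ is a well-behaved finite flat group scheme and the only thing to exclude is an Eisenstein/reducible degeneration, ruled out by irreducibility of $\bar\rho_{f,\ell}$ (a subtlety remains when $p\equiv1\pmod\ell$, where the two local Frobenius eigenvalues collapse and bare Mazur's principle no longer suffices; there one substitutes Ribet's refinement on a quaternionic Shimura curve via Ihara's lemma). I expect the main obstacle to be the case $\ell\mid N$ — then $\ell\mid N/p$ too, since $\ell\neq p$ — where $J_0(N)$ has bad reduction at $\ell$, the finite flat group scheme $J[\ell]$ is not available in the same form, and one must work instead with a Shimura/quaternionic curve or with the $\ell$-multiplicative part of $J_0(N)$ while carefully controlling $\bar\rho_{f,\ell}\vert_{G_{\mathbb{Q}_\ell}}$. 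This is exactly where the hypothesis that $\bar\rho_{f,\ell}\vert_{G_{\mathbb{Q}(\zeta_\ell)}}$ is irreducible enters: it guarantees the relevant Hecke modules are free of the expected rank, with no parasitic congruences coming from the wrong place at $\ell$, so that the passage ``to the good part at $p$'' still succeeds. For the component-group computations and the delicate $\ell\mid N$ analysis I would quote \cite{RibetLow} and \cite{RibetRai} rather than reprove them.
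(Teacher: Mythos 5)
Your route is genuinely different from the one the paper takes. You reconstruct Ribet's original geometric argument: the Deligne--Rapoport semistable model of $X_0(N)$ at $p$, the monodromy pairing on the N\'eron model of $J_0(N)$, Mazur's principle when $p\not\equiv 1\pmod\ell$, and Ihara's lemma on a quaternionic Shimura curve otherwise. The paper instead argues by lifting and modularity: it first invokes Theorem 1.1 of \cite{RibetLow} to reduce to the case where $\bar\rho_{f,\ell}\vert_{G_{\mathbb Q(\zeta_\ell)}}$ is irreducible, then uses Snowden's results to produce a weight-two, finitely ramified $\ell$-adic lift of $\bar\rho_{f,\ell}$ that is \emph{unramified at $p$} and has the prescribed inertial types at the other ramified primes, and finally applies a modularity lifting theorem to conclude that this lift comes from a newform of level dividing $N/p$. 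Your approach is the historically primary one and makes the local analysis at $p$ (the splitting of the Steinberg extension class, the eigenvalue ratio $p$) completely transparent; the paper's approach buys a uniform treatment of the case $\ell\mid N$, where hypothesis (2) is exactly the Taylor--Wiles condition needed for the lifting theorem.

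That $\ell\mid N$ case is also where your sketch has a real gap. The geometric argument you outline, and the reference \cite{RibetLow} you lean on, are written for $\ell\nmid N$; when $\ell\mid N$ the Jacobian has bad reduction at $\ell$, $J[\ell]$ is no longer finite flat, and the passage to the abelian part at $p$ requires the later refinements of Diamond and others, not just a freeness statement for Hecke modules. Your explanation of how hypothesis (2) enters (``the relevant Hecke modules are free of the expected rank'') conflates its role as the Taylor--Wiles condition in modularity lifting with multiplicity-one statements for Jacobians; in the purely geometric approach the actual inputs at this point are Ihara's lemma, the auxiliary-prime trick, and Diamond's extension of level lowering to $\ell\mid N$. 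Also, \cite{RibetRai} is a level-\emph{raising} paper and is not the right citation for any step of the lowering argument. If you want to complete your route honestly, either restrict to condition (1) and cite \cite{RibetLow} for the $p\equiv 1\pmod\ell$ refinement, or replace the hand-wave in the $\ell\mid N$ case by the lifting-plus-modularity argument that the paper uses.
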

\begin{proof}
By theorem 1.1 of \cite{RibetLow} we may assume that $\bar\rho_{f,\ell}\vert_{G_{\mathbb Q(\zeta_\ell)}}$ is irreducible. We first prove the existence of a representation $\rho:\Gal(\bar{\mathbb Q}\mid \mathbb Q)\rightarrow GL_2(\bar{\mathbb Q}_p)$ lifting $\bar\rho_{f,\ell}$ satisfying enough properties such that, if modular, it arises from a cusp eigenform of weight $2$ on $\Gamma_0(N/p)$. We use some results of \cite{Snowden} \textsection 2, \textsection 7 and we follow the notation therein. Consider the lifting problem with 
\begin{enumerate}[(i)]
\item $\Sigma$ equal to the set of primes different from $p$ at which $\rho_{f,\ell}$ is ramified and $\ell$,
\item $\psi$ trivial,
\item type function $t$ equal to the one attached to $\rho_{f,\ell}$,
\item for each $q\in\Sigma$ the inertial type $\tau_q$ of $\rho_{f,\ell}\vert_{G_q}$.
\end{enumerate}
By proposition 2.6.1 in \cite{Snowden}, $t$ is definite. Since $\{\rho_{f,\ell}\vert_{G_q}\}_{q\in \Sigma}$ is a local solution to our lifting problem theorem 7.2.1 in \cite{Snowden} says that there is a global solution $\rho$ that is finitely ramified weight two. It is irreducible since $\bar\rho$ is irreducible, and odd since its determinant is the cyclotomic character. 
By theorem 1.1.4 in \cite{Snowden} $\rho$ is modular, so it arises from a newform $g$ of weight two. Comparing $\rho_{f,\ell}\vert_{G_q}$ and $\rho\vert_{G_q}$ at every $q$ we have that $g$ has level $N/p$ and trivial nebentypus (since $\psi=1$).
\end{proof}
\begin{theorem}[Ribet's level raising theorem]
Let $f$ be a 
normalized eigenform
 of weight $2$ on $\Gamma_0(N)$ such that the mod $\ell$ Galois representation
$$
\bar\rho_{f,\ell}: \Gal(\overline{\mathbb Q}\mid \mathbb Q)\longrightarrow GL_2(\overline{\mathbb F}_\ell)
$$
is irreducible. Let $p\nmid \ell N$ be a prime satisfying
$$
\emph{\text{tr}}\,\bar\rho_{f,\ell}(Frob_p) \equiv (-1)^j(p+1) \pmod \ell
$$
for some $j\in\{0,1\}$. 
Then there exists a normalized eigenform $f'$ $p$-new of weight $2$ on $\Gamma_0(pN)$ such that $\bar\rho_{f,\ell}$ is isomorphic to $\bar\rho_{f',\ell}$. Moreover, the $p$-th Fourier coefficient $a'_p$ of $f'$ equals $(-1)^j$. If $p+1\equiv 0\pmod \ell$ then there are at least two such eigenforms $f'$: one for each coefficient $a'_p\in\{\pm1\}$.
\end{theorem}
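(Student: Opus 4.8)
The plan is to follow Ribet's geometric argument from \cite{RibetRai}: one realizes $\bar\rho_{f,\ell}$ inside the $\ell$-torsion of the $p$-new part of the Jacobian $J_0(Np)$ of $X_0(Np)$ by playing the two $p$-degeneracy maps against one another, the relevant collision being forced precisely by the congruence $a_p\equiv(-1)^j(p+1)\pmod\lambda$. (One could instead try to imitate the deformation-theoretic proof of level lowering given above, running the lifting machine of \cite{Snowden} for $\bar\rho_{f,\ell}$ with $p$ adjoined to the ramified set and a Steinberg local condition imposed at $p$; the point is that the local deformation ring at $p$ is nonzero exactly under the stated hypothesis. We sketch the geometric route.)

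Let $\pi_1,\pi_p\colon X_0(Np)\to X_0(N)$ be the two degeneracy maps, $\alpha=(\pi_{1,*},\pi_{p,*})\colon J_0(Np)\to J_0(N)^2$ the induced Albanese map and $\beta=\pi_1^{*}+\pi_p^{*}\colon J_0(N)^2\to J_0(Np)$ the Picard map, so that the $p$-new abelian subvariety is $A:=(\ker\alpha)^{0}$ and the $p$-old subvariety is the image of $\beta$. The basic identity is
\[
\alpha\circ\beta=\begin{pmatrix}p+1 & T_p\\ T_p & p+1\end{pmatrix}\in\mathrm{End}\bigl(J_0(N)^2\bigr),
\]
of determinant $(p+1)^2-T_p^{2}=\bigl((p+1)-T_p\bigr)\bigl((p+1)+T_p\bigr)$. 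Let $\mathbb T_N$ be the algebra generated by the $T_n$ with $\gcd(n,N)=1$ acting on $S_2(\Gamma_0(N))$ and $\mathfrak m_N\subset\mathbb T_N$ the maximal ideal with $T_n\equiv a_n$ and $\ell\in\mathfrak m_N$; it is non-Eisenstein since $\bar\rho_{f,\ell}$ is irreducible, and by multiplicity one at level $N$ the module $V:=J_0(N)[\mathfrak m_N]$ is $2$-dimensional over $k:=\mathbb T_N/\mathfrak m_N$, carrying the Galois action $\bar\rho_{f,\ell}$. By hypothesis $T_p\equiv(-1)^j(p+1)\pmod{\mathfrak m_N}$, so at least one factor of $\det(\alpha\circ\beta)$ lies in $\mathfrak m_N$; hence the reduction of $\alpha\circ\beta$ modulo $\mathfrak m_N$ is a matrix of rank at most $1$ over $k$ acting on $V^{2}$, and it annihilates a submodule $W\subseteq V^{2}$ with $W\cong V$ when $p+1\not\equiv0\pmod\ell$, and $W=V^{2}$ when $p+1\equiv0\pmod\ell$ (in which case $\alpha\circ\beta\equiv0$).

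Now apply $\beta$ to $W$. From $\alpha\circ\beta(W)=0$ we get $\beta(W)\subseteq\ker\alpha$, and since $\beta(W)$ is finite and non-Eisenstein while the component group of $\ker\alpha$ is Eisenstein, in fact $\beta(W)\subseteq A$. By Ihara's lemma the kernel of $\beta$ on $\ell$-torsion is supported at Eisenstein maximal ideals, so $\beta$ is injective on the non-Eisenstein module $W$; thus $\beta(W)$ is a nonzero $\bar\rho_{f,\ell}$-isotypic subspace of $A[\ell]$, isomorphic as a Galois module to $W$. As $A$ is isogenous to the product of the abelian varieties $A_{f'}$ over the $p$-new newforms $f'$ of weight $2$ on $\Gamma_0(Np)$, the irreducible representation $\bar\rho_{f,\ell}$ occurs in some $A_{f'}[\ell]\otimes\overline{\mathbb F}_\ell$, whence by Brauer--Nesbitt $\bar\rho_{f',\lambda'}\cong\bar\rho_{f,\lambda}$ for a suitable prime $\lambda'\mid\ell$ of $K_{f'}$. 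Since $p\parallel Np$ and $f'$ has weight $2$ and trivial nebentypus, $a'_p\in\{1,-1\}$, and since $f'$ is special at $p$ one has $\mathrm{tr}\,\bar\rho_{f',\lambda'}(Frob_p)\equiv a'_p(p+1)\pmod{\lambda'}$; comparing with $\mathrm{tr}\,\bar\rho_{f,\lambda}(Frob_p)=a_p\equiv(-1)^j(p+1)$ gives $\bigl(a'_p-(-1)^j\bigr)(p+1)\equiv0\pmod{\lambda'}$, and if $p+1\not\equiv0\pmod\ell$ this forces $a'_p\equiv(-1)^j$, hence $a'_p=(-1)^j$ because $a'_p\in\{\pm1\}$ and $\lambda'\nmid2$.

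Finally, when $p+1\equiv0\pmod\ell$ one has $a_p\equiv0$, $W=V^{2}$, and $\beta(W)$ is a $4$-dimensional $\bar\rho_{f,\ell}$-isotypic subspace of $A[\ell]$; by multiplicity one at level $Np$ a single maximal ideal of the Hecke algebra contributes only a $2$-dimensional subspace, so there are at least two maximal ideals in the support of the $p$-new quotient with associated representation $\bar\rho_{f,\ell}$, agreeing on all $T_n$ but with distinct values of $U_p$, necessarily $+1$ and $-1$ since $U_p$ is a root of $X^2-a_pX+p\equiv X^2-1\pmod\ell$; this produces two $p$-new eigenforms of level $Np$ congruent to $f$, one for each $a'_p\in\{\pm1\}$. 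The main obstacle is Ihara's lemma --- the injectivity of $\beta$ on $\mathfrak m_N$-torsion away from Eisenstein primes --- which is what promotes the numerical coincidence $\det(\alpha\circ\beta)\equiv0$ to the statement that $\bar\rho_{f,\ell}$ genuinely lives in the $p$-new part rather than merely somewhere in $J_0(Np)$; the Eisenstein-ness of the component group of $\ker\alpha$, multiplicity one at both levels, and the Eichler--Shimura relation are the remaining, by now standard, inputs, all carried out in \cite{RibetRai}.
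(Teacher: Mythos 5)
The paper offers no proof of this statement---it is quoted as Ribet's theorem with a pointer to \cite{RibetRai} (Theorem 1 and the remarks in \S 3)---and your sketch is a faithful outline of exactly that argument: the degeneracy maps, the identity $\alpha\circ\beta=\left(\begin{smallmatrix}p+1 & T_p\\ T_p & p+1\end{smallmatrix}\right)$, Ihara's lemma, and the Eisenstein-ness of the component group of $\ker\alpha$. The only place you deviate slightly is the two-sign case, where Ribet argues directly with the $U_p$-eigenspace decomposition of the old image (the characteristic polynomial $X^2-T_pX+p\equiv(X-1)(X+1)$ splitting $\beta(V^2)$ into two $2$-dimensional pieces, each meeting the new part) rather than invoking multiplicity one at level $Np$, which is a more delicate input precisely when $p\equiv-1\pmod\ell$ and $\bar\rho$ is unramified at $p$ (here it is saved by $\bar\rho(\mathrm{Frob}_p)$ having the distinct eigenvalues $\pm1$, but Ribet's route avoids the issue altogether).
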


\section{Local decomposition at Steinberg primes and proof of the theorem}
In this article we consider newforms $f$ of weight $2$ on $\Gamma_0(N)$ and we work with primes $p$ dividing $N$ once. Recall that in this case the local type of (the automorphic form attached to) $f$ at $p$ is a twist of the Steinberg representation. 
\begin{definition}\label{defsteinberg}
Let $f$ be a normalized cusp Hecke eigenform of weight $2$ level $N$ and trivial nebentypus. We say that a prime $p$ is a \emph{Steinberg prime} of $f$ if $p\parallel N$ and $f$ is $p$-new.
\end{definition}
	\begin{remark}
	With the hypothesis of definition \ref{defsteinberg}, corollary 4.6.20 in \cite{Miyake} says that there is a unique newform $g$ of weight $2$ on $\Gamma_0(N')$  for some divisor $N'$ of $N$, $p\mid N'$, such that $f$ is in the old-space generated by $g$. Theorem 4.6.17 in \cite{Miyake} implies that $a_p(g)\in \{-1,1\}$. Since $f$ is normalized and $p\nmid N/N'$ it is easy to see that $a_p(f)=a_p(g)$.
	\end{remark}

Here we state a useful lemma related to the local behavior of the mod $\ell$ Galois representations at a Steinberg prime. 

\begin{lemma}[Langlands]\label{locallemma}
Let $f$ be a cusp Hecke eigenform of weight $2$, level $N$ and trivial nebentypus. Let $p$ be a Steinberg prime of $f$. Then one has that
$$
\bar\rho_{f,\ell}\vert_{D_p} \sim 
\left(
\begin{array}{cc}
\bar\chi\cdot\bar\varepsilon_\ell  &*\\
0	&\bar\chi
\end{array}
\right)
$$
for every prime $\ell\neq p$, where $\bar\chi:D_p\rightarrow \mathbb F^*_\ell$ denotes the unramified character that maps $\text{Frob}_p$ to $a_p(f)$ and $\bar\varepsilon_\ell$ denotes the mod $\ell$ cyclotomic character.
\end{lemma}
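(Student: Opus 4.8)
The plan is to read off the shape of $\rho_{f,\ell}\vert_{D_p}$ from the local Langlands correspondence at $p$ — this is exactly where the hypothesis $\ell\neq p$ is used — and then reduce modulo the maximal ideal of $\mathcal O_{f,\ell}$. First I would reduce to the case where $f$ is itself a newform: by the remark following Definition \ref{defsteinberg}, $f$ lies in the oldspace of a unique newform $g$ on $\Gamma_0(N')$ with $p\parallel N'$ and $a_p(f)=a_p(g)$, and since $f$ and $g$ have the same $q$-expansion away from $N$ they afford the same $\ell$-adic representation; so it suffices to treat $g$, and we may assume $p\parallel N$ with $f$ a newform.

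Next, because $p\parallel N$, $f$ is $p$-new and the nebentypus is trivial, the local component $\pi_p$ of the automorphic representation attached to $f$ has conductor exponent $1$ and trivial central character; the only irreducible admissible representations of $GL_2(\mathbb{Q}_p)$ of conductor exponent $1$ are the unramified twists $\mathrm{St}\otimes\mu$ of the Steinberg representation, and trivial central character forces $\mu^2=1$, so $\mu$ is unramified with $\mu(\mathrm{Frob}_p)\in\{\pm1\}$; comparing with the Hecke action one gets $\mu(\mathrm{Frob}_p)=a_p(f)$ (consistent with Theorem 4.6.17 of \cite{Miyake} cited above). Then I would invoke local--global compatibility at the prime $p\neq\ell$ for the Galois representation attached to a weight-$2$ newform — for weight $2$ this can be obtained concretely from the Eichler--Shimura relation together with the N\'eron model / Tate-curve description of $J_0(N)$ at $p$, and in general it is due to Langlands, Deligne, Carayol and Saito — to conclude that $\rho_{f,\ell}\vert_{D_p}$ sits in a non-split short exact sequence
$$
0\longrightarrow \chi\varepsilon_\ell\longrightarrow \rho_{f,\ell}\vert_{D_p}\longrightarrow \chi\longrightarrow 0,
$$
where $\varepsilon_\ell$ is the $\ell$-adic cyclotomic character and $\chi\colon D_p\to K_{f,\ell}^\ast$ is the unramified character with $\chi(\mathrm{Frob}_p)=a_p(f)$; the non-splitness records the nontrivial monodromy of $\mathrm{Sp}(2)$, but it is not needed below.

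Finally, I would choose a $D_p$-stable $\mathcal O_{f,\ell}$-lattice $T$ in the representation space, set $T_1=T\cap(\chi\varepsilon_\ell)$ and $T_2=T/T_1$, which are rank-one with $D_p$ acting through $\chi\varepsilon_\ell$ and $\chi$ respectively, and note that $T_1$ is saturated in $T$, so reducing modulo $\mathfrak m_{f,\ell}$ gives
$$
0\longrightarrow \bar\chi\bar\varepsilon_\ell\longrightarrow \overline T\longrightarrow \bar\chi\longrightarrow 0,
$$
i.e.\ $\overline T\vert_{D_p}\sim\left(\begin{smallmatrix}\bar\chi\bar\varepsilon_\ell & *\\ 0 & \bar\chi\end{smallmatrix}\right)$. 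Since $a_p(f)=\pm1$ and $\ell\neq 2$, $\bar\chi$ is precisely the unramified character sending $\mathrm{Frob}_p$ to $a_p(f)$ from the statement; and since $\overline T$ always has $\bar\chi\bar\varepsilon_\ell$ as a sub (independently of the lattice) while $\bar\rho_{f,\ell}$ is determined up to semisimplification, $\bar\rho_{f,\ell}\vert_{D_p}$ has the asserted shape for every $\ell\neq p$. The one genuinely delicate point is citing local--global compatibility at $p$ in the sharp ``extension, not merely semisimplification'' form — equivalently, pinning down the monodromy operator — which is exactly the place where $\ell\neq p$ is essential; the remainder is routine bookkeeping with lattices and reduction.
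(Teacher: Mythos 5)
Your proposal is correct and follows essentially the same route as the paper: identify the local component $\pi_{f,p}$ as an unramified quadratic twist of Steinberg, apply local--global compatibility at $p\neq\ell$ (Carayol) to get the reducible shape of $\rho_{f,\ell}\vert_{D_p}$ with diagonal characters $\chi\varepsilon_\ell$ and $\chi$, and reduce modulo $\mathfrak m_{f,\ell}$. The paper simply cites Loeffler--Weinstein and Carayol for these steps, whereas you additionally spell out the reduction to the newform case and the saturated-lattice argument; these are elaborations of the same proof, not a different one.
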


\begin{proof}
See \cite{LW} (Loeffler, Weinstein 2012)  proposition 2.8, we follow the notation therein. The newform $f$ is $p$-primitive since $p\parallel N$. Recall that Hecke correspondence (a modification of local Langlands correspondence) attaches to $\pi_{f,p}\simeq St\otimes \alpha$, for an unramified character $\alpha$ of $\mathbb Q_p^*$, a two dimensional Weil-Deligne representation that corresponds to a Galois representation $r:\Gal(\bar{\mathbb Q}_p\mid\mathbb Q_p)\rightarrow GL_2(\bar{\mathbb Q}_\ell)$ of the form
$$
r\sim
\left(\begin{array}{cc}
\alpha \cdot \varepsilon_\ell  	&*\\
0   						&\alpha
\end{array}
\right),
$$
where we identify $\alpha$ with a character of $\Gal(\bar{\mathbb Q}_p\mid \mathbb Q_p)$ via local class field theory. It is a theorem of Carayol \cite{Carayol} that $\rho_{f,\ell}\vert_{D_p}$ and $r$ are isomorphic.
\end{proof}
\begin{theorem}[Main result]\label{THEtheorem}
Let $f$ be a newform of weight $2$ on $\Gamma_0(N)$ and let $p$ be a prime at which $f$ is Steinberg. Let $\ell \nmid 2p$ be a prime.
Assume either 
\begin{enumerate}[1.]
\item $\ell \nmid N$ and $\bar\rho_{f,\ell}$ is absolutely irreducible, or
\item $\bar\rho_{f,\ell}\vert_{G_{\mathbb Q(\zeta_\ell)}}$ is absolutely irreducible.
\end{enumerate}
Then the following are equivalent:
\begin{enumerate}[(a)]
\item $\bar{\rho}_{f,\ell}$ is unramified at $p$ and 
$$
p \equiv -1 \pmod \ell.
$$
\item There is a normalized eigenform $f'$ $p$-new of weight $2$ on $\Gamma_0(N)$ such that $\bar\rho_{f',\ell}$ is isomorphic to $\bar\rho_{f,\ell}$ and 
$$
a_p = -a'_p
$$
where $a_p$ (resp. $a'_p$) is the $p$-th Fourier coefficient of $f$ (resp. $f'$).
\end{enumerate}
\end{theorem}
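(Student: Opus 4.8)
The plan is to prove the two implications separately. From (a) I will obtain (b) by applying Ribet's level lowering theorem at $p$ and then his level raising theorem at $p$; from (b) I will obtain (a) by studying the restriction to a decomposition group $D_p$ of the common residual representation by means of Lemma \ref{locallemma}.

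\emph{From (a) to (b).} Assume $\bar\rho_{f,\ell}$ is unramified at $p$ and $p\equiv-1\pmod\ell$. Since $\ell\nmid2p$, $\bar\rho_{f,\ell}$ is absolutely irreducible under either hypothesis and is unramified at $p$, Ribet's level lowering theorem yields a newform $g$ of weight $2$ on $\Gamma_0(M)$ with $M\mid N/p$, hence $p\nmid M$, and $\bar\rho_{g,\ell}\cong\bar\rho_{f,\ell}$. By Lemma \ref{locallemma} the semisimplification of $\bar\rho_{f,\ell}|_{D_p}$ is $\bar\chi\oplus\bar\chi\bar\varepsilon_\ell$, so (using that $\bar\rho_{f,\ell}$ and $\bar\rho_{g,\ell}$ are unramified at $p$)
\[
\operatorname{tr}\bar\rho_{g,\ell}(\mathrm{Frob}_p)=\operatorname{tr}\bar\rho_{f,\ell}(\mathrm{Frob}_p)\equiv a_p(f)\,(p+1)\pmod\ell ,
\]
and since $a_p(f)\in\{\pm1\}$ this equals $\pm(p+1)$, which is exactly the hypothesis of Ribet's level raising theorem for $g$ at $p$. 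As $p+1\equiv0\pmod\ell$, the final assertion of that theorem produces a $p$-new normalized eigenform $\tilde f$ of weight $2$ on $\Gamma_0(pM)$ with $\bar\rho_{\tilde f,\ell}\cong\bar\rho_{f,\ell}$ and with $p$-th coefficient $-a_p(f)$. Finally $pM\mid N$ and $N/(pM)$ is prime to $p$, so I would lift $\tilde f$ to a Hecke eigenform $f'$ on $\Gamma_0(N)$ in the span of the images of $\tilde f$ under the degeneracy maps: $f'$ is again $p$-new, has the same Fourier coefficients as $\tilde f$ at all indices prime to $N$ and at $p$, whence $\bar\rho_{f',\ell}\cong\bar\rho_{\tilde f,\ell}\cong\bar\rho_{f,\ell}$ and $a'_p=-a_p(f)$. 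This $f'$ satisfies (b).

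\emph{From (b) to (a).} Let $f'$ be as in (b) and put $\bar\rho:=\bar\rho_{f,\ell}\cong\bar\rho_{f',\ell}$. By Lemma \ref{locallemma}, $\bar\rho|_{D_p}$ is an extension of the unramified character $\bar\chi$, $\bar\chi(\mathrm{Frob}_p)=a_p$, by $\bar\chi\bar\varepsilon_\ell$, and is also an extension of the unramified character $\bar\chi'$, $\bar\chi'(\mathrm{Frob}_p)=a'_p=-a_p$, by $\bar\chi'\bar\varepsilon_\ell$. I first claim $\bar\rho|_{D_p}$ is split. If it were not, it would be a non-split extension of one character by another; if $p\not\equiv1\pmod\ell$ the two characters are distinct, the extension has a unique $D_p$-stable line, and its character must equal both $\bar\chi\bar\varepsilon_\ell$ and $\bar\chi'\bar\varepsilon_\ell$; if $p\equiv1\pmod\ell$ the semisimplification reads $\bar\chi\oplus\bar\chi=\bar\chi'\oplus\bar\chi'$. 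Either way $\bar\chi=\bar\chi'$ on $D_p$, so $a_p\equiv-a_p\pmod\ell$, which is impossible as $\ell\nmid2$ and $a_p\in\{\pm1\}$. Hence $\bar\rho|_{D_p}$ is semisimple; since $\bar\chi$ and $\bar\varepsilon_\ell$ are unramified at $p$ (here $\ell\ne p$ is used), $\bar\rho$ is unramified at $p$. Comparing the semisimple restrictions gives the equality of multisets of characters $\{\bar\chi,\bar\chi\bar\varepsilon_\ell\}=\{\bar\chi',\bar\chi'\bar\varepsilon_\ell\}$ of $D_p$; evaluating at $\mathrm{Frob}_p$ gives $\{a_p,p\,a_p\}=\{-a_p,-p\,a_p\}$ in $\mathbb F_\ell$, and since $a_p=-a_p$ is excluded we must have $a_p=-p\,a_p$, i.e.\ $p\equiv-1\pmod\ell$. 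This establishes (a).

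\emph{The main obstacle.} The implication (b)$\Rightarrow$(a) is forced fairly directly by Lemma \ref{locallemma}; its only delicate point is ruling out a non-split local extension at $p$. The work in (a)$\Rightarrow$(b) lies in two places: verifying that the level-raising hypothesis is automatic — which is precisely where $a_p(f)\in\{\pm1\}$ together with Lemma \ref{locallemma} is used, and where $p\equiv-1\pmod\ell$ is needed in order to get both choices of sign for the $p$-th coefficient — and the routine descent carrying the $p$-new eigenform from $\Gamma_0(pM)$ up to $\Gamma_0(N)$ without altering $a_p$ or the residual representation. Hypotheses (1) and (2) are used only to invoke Ribet's level-lowering theorem.
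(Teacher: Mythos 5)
Your proof is correct and follows essentially the same route as the paper: level lowering followed by level raising (with the choice of sign for $a'_p$ available precisely because $p+1\equiv 0\pmod\ell$) for (a)$\Rightarrow$(b), and a comparison of the two local shapes supplied by Lemma~\ref{locallemma} for (b)$\Rightarrow$(a). If anything your write-up is slightly more careful than the paper's: you explicitly rule out a non-split local extension at $p$ before comparing characters, and you spell out the passage from level $pM$ back up to level $N$, both of which the paper leaves implicit.
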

\begin{proof}
\emph{(b) implies (a)}

Let us write $\bar\rho = \bar\rho_{f,\ell}$ and $\bar{\rho}'=\bar\rho_{f',\ell}$ for simplicity. Let $D_p\subseteq \Gal(\overline{\mathbb Q}\mid\mathbb Q)$ be a decomposition group of $p$. Since $\{a_p,a'_p\} = \{1, -1\}$, we may assume without loss of generality that  $\bar\rho$, $\bar\rho'$ act locally at $p$ as
$$
\bar\rho \vert_{D_p}\sim
\left(
\begin{array}{cc}
\bar\varepsilon_\ell  & *\\
0 		  &1
\end{array}
\right)
$$
and 
$$
\bar\rho' \vert_{D_p}\sim
\left(
\begin{array}{cc}
\bar\chi\cdot\bar\varepsilon_\ell  & *\\
0 		  &\bar\chi
\end{array}
\right)
$$
due to lemma \ref{locallemma}.
Since $\bar\rho$ and $\bar\rho'$ are isomorphic so are their local behaviors. Thus, specializing at  a Frobenius map
$$
\left(
\begin{array}{cc}
p  & *\\
0 		  &1
\end{array}
\right)
\sim
\left(
\begin{array}{cc}
-p  & *\\
0 		  &-1
\end{array}
\right)\pmod \ell.
$$
Eigenvalues must coincide and $\ell>2$ so
$$
p\equiv -1 \pmod \ell
$$
To see that $\bar\rho$ is unramified at $p$ notice that 
$$
\bar\chi \bar\varepsilon_\ell\not\equiv \bar\varepsilon_\ell.
$$
Thus, $\bar\rho\vert_{D_p}\simeq \bar\varepsilon_\ell \oplus\bar\chi\bar\varepsilon_\ell$. Indeed, lemma \ref{locallemma} and the isomorphism $\bar\rho\simeq\bar\rho'$ say that $\bar\rho\vert_{D_p}$ has two $1$-dimensional subrepresentations. Since the actions are different, they generate the whole space. Hence $\bar\rho$ is unramified at $p$.

\bigskip
\noindent\emph{(a) implies (b)}\\
Ribet's lowering level theorem applies to the modular representation $\bar\rho_{f,\ell}$ of level $N$. Thus, there exists a newform $g$ of weight $2$ on $\Gamma_0(M)$, for some $M\mid N/p$ such that $\bar\rho_{g,\ell}\sim \bar\rho_{f,\ell}$. Moreover, we have that
\begin{align*}
\text{tr} \bar\rho_{f,\ell}(\text{Frob}_p)&\equiv a_p\cdot (p+1)\\
							&\equiv 0  \pmod \ell
\end{align*}
by lemma \ref{locallemma}.
Now we can apply Ribet's raising level theorem to $\bar\rho_{g,\ell}$ and there exists an eigenform $f'$ $p$-new on $\Gamma_0(N)$ such that 
$$
\bar\rho_{f',\ell}\sim\bar\rho_{f,\ell}.
$$  
By \S $3$ page $9$ of \cite{RibetRai}, when both conditions 
$$
\text{tr}\bar \rho_{g,\ell} ( Frob_p )\equiv \pm (p+1) \pmod \ell
$$
are satisfied Ribet's proof allows us to choose the $a_p$ coefficient of $f'$. We shall choose $f'$ such that $a'_p = - a_p$ and the implication holds.
\end{proof}
\begin{remark}\label{remark:new}
Let $f$ be a newform satisfying $(a)$. We expect that theorem \ref{THEtheorem} can be strengthened so that $f'$ can be chosen to be a newform, not necessarily unique. This would follow from a stronger version of \cite{RibetRai}. See section $5$ for an example.
\end{remark}
\section{An example}
In this section we are going to give an example of mod $5$ Galois representation to which our theorem applies. We will use many well-known properties of elliptic curves without proof.

Let $\ell$ be a prime, $n>0$ an integer and $E$ an elliptic curve over $\mathbb Q$. The $\ell^n$-th torsion group $E[\ell^n]$ of $E$ has a natural structure of free $\mathbb Z/\ell^n\mathbb Z$-module of rank $2$. 
The action of the Galois group $G=\Gal(\overline{\mathbb Q}\mid\mathbb Q)$ is compatible with the $\mathbb Z/\ell^ n \mathbb Z$-module structure of  $E[\ell^n]$, so that $E[\ell^n]$ has a natural structure of 
$\mathbb (\mathbb Z/\ell^n\mathbb Z)[G]$-module.  
That is, the action induces a group homomorphism
$$
\bar\rho_{E,\ell^n}: \Gal(\overline{\mathbb Q}\mid \mathbb Q)\longrightarrow Aut_{\mathbb Z/\ell^n\mathbb Z} (E[\ell^n])\simeq GL_2(\mathbb Z/\ell^n\mathbb Z)
$$
where the isomorphism depends on the choice of a basis in $E[\ell^n]$. The case $n=1$ is of special interest and is known as the mod $\ell$ Galois representation attached to $E$. The Tate module $\mathcal T_\ell\, E=\varprojlim  E[\ell^n]$ of $E$ at $\ell$ is a free $\mathbb Z_\ell$-module of rank $2$. The morphisms $\{\bar\rho_{E,\ell^n}\}$ induce a group morphism
$$
\rho_{E,\ell}:\Gal(\overline{\mathbb Q}\mid \mathbb Q)\longrightarrow Aut (\mathcal T_\ell \, E)\simeq GL_2(\mathbb Z_\ell)
$$
known as the $\ell$-adic Galois representation attached to $E$. The mod $\ell$ Galois representation $\bar\rho_{E,\ell}$ attached to  $E$ can be recovered from $\rho_{E,\ell}$ by taking reduction mod $\ell\mathbb Z_\ell$.

Well-known modularity
 theorems as Wiles, Taylor-Wiles and Breuil-Conrad- Diamond-Taylor state that such a $\ell$-adic Galois representation is isomorphic to the Galois representation $\rho_{f_E,\ell}$ attached
 to some newform $f_E$ of weight $2$ on $\Gamma_0(N)$ for $N$ equal to the conductor of $E$ and $K_f=\mathbb Q$.
 Moreover the $p$-th Fourier coefficient of 
 $f_E$ coincides with the $c_p$ coefficient of $E$ (defined below) for every prime $p$. In this section we will apply theorem \ref{THEtheorem} to the mod $5$ Galois representation $\bar\rho_{E,5}\simeq \bar\rho_{f_E,5}$ attached to  the following elliptic curve given by a (global minimal) Weierstrass equation:
$$
E:ZY^2 +XYZ + YZ^2 = X^3 + X^2Z -614X Z^2-5501Z^3.
$$
Its discriminant is
$$
\Delta = 2^5\cdot19^5\cdot 37\\
$$
For every prime $p$ let $\tilde E_p$ denote the curve obtained by reducing mod $p$ a global minimal Weierstrass model of $E$. As usual, we consider the value
$$
c_p=p+1 -\#\tilde E_p
$$
for every prime $p$.
One can check that 
$$
\begin{cases}
\text{$\tilde E_2$ has a node whose tangent lines are defined over $\mathbb F_2$,}\\
\text{$\tilde E_p$ has a node whose tangent lines have slopes in $\mathbb F_{p^2}\setminus \mathbb F_p$  for $p\in \{19,37\}$,}\\
\text{$\tilde E_p$ is an elliptic curve over $\mathbb F_p$, otherwise.}
\end{cases}
$$
Thus $E$ has conductor $N= 2\cdot 19\cdot 37=1406$, $c_2 = -c_{19}=-c_{37}=1$ and there is a newform $f=q+\sum_{n\geq 2} a_n q^n$ of weight $2$ on $\Gamma_0(1406)$ such that 
\begin{itemize}
\item $\rho_{E,5}$ and $\rho_{f,5}$ are isomorphic.
\item
$c_p = a_p$ for every prime $p$.
\end{itemize}
Let us see now that $\bar\rho_{f,5}$ satisfies the hypothesis of the theorem \ref{THEtheorem} for $p=19$. Since
\begin{align*}
E_3&=\{[x:y:z]\in \mathbb P^2_{\mathbb F_3}: zy^2 + xyz+ yz^2 = x^3+x^2z+xz^2+z^3\}\\
	&=\{[0:1:0], [2:0:1]\}
\end{align*}
then $c_3= 2$ and the characteristic polynomial of $\bar\rho_{f,5}(Frob_3)$ is congruent to
$$
P(X) = X^2- 2X + 3   \pmod 5.
$$
Since the discriminant of $P(X)$ is $2\pmod 5$ and $2$ is not a square in $\mathbb F_5$, then $P(X)$ is irreducible over $\mathbb F_5$. In particular $\bar\rho_{f,5}:\Gal(\overline{\mathbb Q}\mid\mathbb Q)\longrightarrow GL_2(\mathbb F_5)$ is irreducible. It is well known that such a representation is irreducible if and only if it is absolutely irreducible, thus $\bar\rho_{f,5}$ is absolutely irreducible. Indeed, $\bar\rho_{f,5}$ is odd and hence its image contains a matrix $E$ with eigenvalues $\{\pm 1\}$. Say $\bar\rho_{f,5}(c)=  E$. If $\bar\rho_{f,5}$ is not absolutely irreducible $\bar\rho_{f,5}$ is conjugate over $\bar{\mathbb F}_5$ to a representation of the form
$$
\bar r=\left(
\begin{array}{cc}
\theta_1  &*\\
0 		&\theta_2
\end{array}
\right)
$$
for some characters $\theta_1,\theta_2$. Take $\epsilon:=\theta_1(c)\in\{\pm1\}$.
By changing coordinates over $\mathbb F_5$ we may assume that
$$
\bar\rho_{f,5}(c)=\left(\begin{array}{cc}
 \epsilon &0\\
 0   &-\epsilon
\end{array}\right).
$$ By comparing $\bar r$ and $\bar\rho_{f,5}$ at $c$ it follows that the conjugation matrix is upper triangular and hence that $\bar\rho_{f,5}$ is upper triangular over $\mathbb F_5$.

On the other hand, it is well known (see \cite{DDT} proposition 2.12) that for a prime $p\neq \ell$ dividing once the conductor of an elliptic curve $E$, $\bar\rho_{E,\ell}$ is  unramified at $p$ if and only if $\ell\mid v_{p}(\Delta)$. Since
$
v_{19}(\Delta)=5,
$
$\bar\rho_{f,5}$ is also unramified at $19$. Hence theorem \ref{THEtheorem} applies to $\bar\rho_{E,5}$ and there exists another eigenform $f'$ of weight $2$ on $\Gamma_0(1406)$ 
such that $\bar\rho_{f',\ell}$ is isomorphic to $\bar\rho_{E,5}$. Thus, 
$$
c_p \equiv a_p \pmod{ \mathfrak{m}_{f',\ell}}.
\qquad\text{for every $p\nmid 1406\cdot 5$}.
$$
and the $19$th Fourier coefficient $a'_{19}$ of $f'$ satisfies
$$
a'_{19}=-c_{19}=1.
$$

In this example we can find an elliptic curve $E'$ of conductor $1406$ such that its corresponding newform $f'$ satisfies part (b) of theorem \ref{THEtheorem}. In order to find $E'$ we have assumed that 
$f'$ can be chosen to be a newform,
see remark \ref{remark:new}. Assuming that $f'$ is newform we can determine $a'_2$ and $a'_{37}$ of $f'$ as follows. If $a'_2=-c_2$ theorem \ref{THEtheorem} applies with $\ell=5$ and $p=2$ so we conclude that $2\equiv -1\pmod 5$. Hence, $a'_2\neq -c_2$. Since $a'_2\in\{\pm 1\}$ then $a'_2 = c_2=1$. Similarly one can prove that $a'_{37}= c_{37}=-1$. There are three newforms (up to conjugation) with this configuration of signs $(a'_2,a'_{19},a'_{37})=(1,1,-1)$ out of sixteen newforms of weight $2$ on $\Gamma_0(1406)$. One of those (the only one satisfying $a'_3\equiv c_3 \pmod{ \mathfrak m_{f',5}}$) corresponds to the elliptic curve $E'$ given by a global minimal Weierstrass equation
$$
E': Y^2 Z + XYZ + YZ^2 = X^3- X^2 Z-1191X Z^2 +507615 Z^3.
$$
One can check that its conductor is $N= 1406$ and that $c'_2 = c'_{19} = -c'_{37}=1$. We will use Sturm's bound (see theorem 9.18 in \cite{Steinbook} Stein's book) in order to prove that $E'$ corresponds to an eigenform $f'$ as in theorem \ref{THEtheorem}. Notice that 
$$
c'_{19} =19+1-\# \tilde {E'}= 1 = - c_{19}.
$$
Hence, Sturm's result does not apply to the pair $(f_{E}, f_{E'})$. After twisting both modular forms $f_E$, $f_{E'}$ by the quadratic character $\psi$ of conductor $19$ we get two cusp forms $f^\psi_E$, $f^\psi_{E'}$ of weight $2$ on $\Gamma_0(1406\cdot 19)$. Sturm's bound for cusp forms of weight $2$ on  $\Gamma_0(1406\cdot 19)$ is $7218.38$ and one can check computationally that 
\begin{equation}\label{congruence}
\psi(p)c_p \equiv \psi(p)c'_p\pmod 5
\end{equation}
for every prime $p< 7219$. Hence, Sturm's result applies to $(f^{\psi}_E,f^{\psi}_{E'})$ and \eqref{congruence} is true for every $p$. Thus
$$
c_p\equiv c'_p \pmod 5
$$
for every prime $p$ for which $\psi(p)\neq 0$, i.e for every prime $p\neq 19$.
\begin{remark}
Take again $\ell=5$, $p=19$. Similarly one can check that at level $741$ there is an elliptic curve (Cremona's labeling 741b) satisfying theorem \ref{THEtheorem} a). Notice that there exists no elliptic curve of conductor $741$ corresponding to an $f'$ as in theorem \ref{THEtheorem} b). However, it does exist a newform $f$ on $\Gamma_0(741)$ and weight $2$ satisfying theorem \ref{THEtheorem} b), it satisfies $[K_f:\mathbb Q]=4$.
\end{remark}



\end{document}